\newtheorem{theo}{Theorem}
\newtheorem{lem}[theo]{Lemma}
\newtheorem{prop}[theo]{Proposition}
\theoremstyle{definition}
\newtheorem{defi}[theo]{Definition}
\newtheorem{remark}[theo]{Remark}
\newcommand{\E}{\mathcal E}
\newcommand{\F}{\mathcal F}
\def\L{\mathcal{L\/}}
\def\hra{\hookrightarrow}
\def\deg{\operatorname{deg}}
\begin{document}

\title{An elementary proof that vector bundles on $\mathbb P^1$ split}
\author{William F. Sawin}

\maketitle

\begin{abstract}This paper gives a new elementary proof of the theorem that all vector bundles on $\mathbb P^1$ split into the direct sum of line bundles. The proof is based on the study of divisors associated to germs of sections at the generic point.

\end{abstract} 

Dedekind and Weber first proved that all vector bundles on $\mathbb P^1$ split into the direct sum of line bundles, in terms of the factorization of matrices, in \cite{DedekindWeber82}.  Grothendieck later proved it using a cohomology argument in \cite{Grothendieck57}. This paper presents a new elementary proof, based on studying divisors of meromorphic sections. We will use elementary standard facts from algebraic geometry which can be found in \cite[Chapter II]{Hartshorne77}.

The writing of this paper was aided by a tremendous number of comments from Tobias Dyckerhoff and Mikhail Kapranov.

\vspace{10pt}

Let $X$ be a nonsingular irreducible algebraic curve, and $\E$ a vector bundle on $X$ (i.e. a locally free sheaf of finite rank). Let $K$ be the field of rational functions on $X$. If we localize $\E$ at the generic point $\xi$, we get a $K$-vector space, $\E_\xi$. We begin by attaching some additional structure to that vector space.

\begin{defi}Let $P$ be a point on $X$ and $\alpha$ an element of $\E_\xi$. Let $t$ be a local parameter at $P$. Since there is a injection from $\E_P$ to $\E_\xi$, we can view the former as an $\mathcal O_P$-submodule of the latter. We define the {\em order of $\alpha$ at $P$} to be

\[ \delta_P(\alpha)=\max \{ n | t^{-n} \alpha \in \E_P\}.\]

This number does not depend on our choice of $t$.\end{defi}

\begin{defi}

We define the {\em divisor $\delta(\alpha)$ of $\alpha$} to be the divisor whose value at each $P$ is $\delta_P(\alpha)$. The {\em degree $\deg(\alpha)$ of $\alpha$} is the degree of $\delta(\alpha)$.

\end{defi}

It is easy to check that $\delta_P(\alpha)$ is always finite, and that $\delta(\alpha)$ has finite support. The proofs are mostly analogous to the proofs for the divisor function on $K$, which we will also refer to as $\delta$.

\begin{lem}\label{properties} The divisor function $\delta$ has the following properties.

\begin{enumerate}

\item For $f\in K$, $\alpha\in \E_\xi$, we have

\[ \delta(f \alpha)=\delta(f)+\delta(\alpha).\]

\item For $\alpha_1,...\alpha_n\in \E_\xi$, we have

\[\delta\left(\sum_{i=1}^n\alpha_i\right)\geq \min_i \delta(\alpha_i).\]

\end{enumerate}

\end{lem}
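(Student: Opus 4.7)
Both claims are pointwise, so the plan is to fix a point $P$ with local parameter $t$ and verify the analogous statements for $\delta_P$; summing over $P$ then yields the statements for divisors. The single essential fact I will use is that $\E_P$ is an $\mathcal{O}_P$-submodule of $\E_\xi$ and that $\mathcal{O}_P$ is a discrete valuation ring with uniformizer $t$, so every nonzero $f \in K$ can be written as $f = t^{m} u$ with $m = \delta_P(f)$ and $u \in \mathcal{O}_P^\times$.

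For part (1), with $f = t^m u$ as above, I would rewrite the defining condition $t^{-n}(f\alpha) \in \E_P$ as $t^{m-n} u \alpha \in \E_P$. Since $u$ is a unit of $\mathcal{O}_P$ and $\E_P$ is closed under multiplication by such units, this membership is equivalent to $t^{m-n}\alpha \in \E_P$. Taking the maximum $n$ then gives $\delta_P(f\alpha) = m + \delta_P(\alpha) = \delta_P(f) + \delta_P(\alpha)$.

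For part (2), I would set $m = \min_i \delta_P(\alpha_i)$. Since $\delta_P(\alpha_i) \geq m$ for every $i$, the definition of $\delta_P$ gives $t^{-m}\alpha_i \in \E_P$ for each $i$. Using that $\E_P$ is closed under addition, $t^{-m}\sum_i \alpha_i \in \E_P$, hence $\delta_P(\sum_i \alpha_i) \geq m$, which is exactly the required inequality.

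There is essentially no obstacle: both parts reduce to the $\mathcal{O}_P$-module structure on $\E_P$. The only point worth flagging is that the proof tacitly requires $\delta_P$ to take finite integer values (so that writing "maximum" and adding makes sense), but this is precisely the remark granted immediately before the lemma statement, justified by the same argument as for the ordinary valuation on $K$.
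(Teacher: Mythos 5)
Your proof is correct and takes exactly the approach the paper intends: the paper's own proof is the one-line remark that the argument is ``analogous to the corresponding properties of the usual divisor function on $K$,'' and your write-up simply supplies those details (the factorization $f=t^{m}u$ with $u\in\mathcal{O}_P^{\times}$ and the fact that $\E_P$ is an $\mathcal{O}_P$-submodule of $\E_\xi$, hence closed under addition and under multiplication by units). Nothing further is needed.
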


\begin{proof} Analogous to the corresponding properties of the usual divisor function on $K$.\end{proof}

\begin{remark}Each point on a curve corresponds to a valuation, which can be converted into an absolute value by the formula $|f|=q^{-\delta_P(f)}$ for $q>1$, the same process used to create p-adic absolute values. The corresponding function $q^{-\delta_P(\alpha)}$ on $\E_\xi$ gives it the structure of a Banach space. This sort of space is discussed in \cite{GoldmanIwahori63}.\end{remark}

\begin{defi}Recall that when we identify a vector bundle with its sheaf of sections, then line bundles correspond to locally free sheaves of rank $1$. Given an injective map $\F \hra \E$ of locally free sheaves, we call $\F$ a subbundle of $\E$ if for every point $f: {P} \to X$ the map on fibers $f^*\F \to f^*\E$ is injective.\end{defi}

\begin{prop}\label{bundles}

\begin{enumerate} The following hold.

\item Let $\F$ and $\E$ be vector bundles. The isomorphisms $\E \stackrel{\sim}{\to} \F$ are in one-to-one correspondence with those isomorphisms $\E_{\xi} \stackrel{\sim}{\to}\F_{\xi}$ which commute with $\delta$.

\item For a vector bundle $\E$, every $1$-dimensional subspace $L \subset \E_{\xi}$ corresponds to a unique sub line bundle $\L \hra \E$ such that $L = \L_{\xi}$.

\item Let $\{e_i\}$ be a basis of $\E_\xi$. $\E$ splits into the line bundles corresponding to the spaces generated by the $e_i$ if and only if, for all $f_1,...,f_n \in K$

\begin{equation}\label{eq.split} \delta(\sum_{i=1}^{n} f_i e_i) = \min_i \delta(f_i e_i). \end{equation}
\end{enumerate}

\end{prop}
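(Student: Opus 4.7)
For part (1), the forward direction is immediate: an isomorphism of sheaves preserves stalks as submodules of the generic fiber, and $\delta_P$ is defined purely via the inclusion $\E_P \hra \E_\xi$. For the converse, I would use the tautological identity $\E_P = \{\alpha \in \E_\xi : \delta_P(\alpha) \geq 0\}$, which is immediate from the definition. A $\delta$-preserving $K$-linear isomorphism $\phi: \E_\xi \to \F_\xi$ then carries each $\E_P$ bijectively onto $\F_P$. Since on a nonsingular curve a section of a locally free sheaf over $U$ is exactly an element of the generic fiber lying in every stalk $\E_P$ for $P \in U$, $\phi$ restricts to bijections $\E(U) \to \F(U)$ that glue to a sheaf isomorphism.

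For part (2), I would define $\L$ as the subsheaf $U \mapsto \E(U) \cap L$ (intersection taken inside $\E_\xi$). To verify $\L$ is a line bundle, fix $P \in X$ and choose $\alpha \in L$ with $\delta_P(\alpha) = 0$ (take any nonzero element of $L$ and rescale by an appropriate power of a local parameter). Since $\delta(\alpha)$ has finite support, on a small enough open $U \ni P$ we have $\delta_Q(\alpha) = 0$ for every $Q \in U$. Then $\alpha \in \E(U)$, and Lemma \ref{properties}(1) gives $g \alpha \in \E(U)$ iff $g \in \mathcal O(U)$; hence $\L|_U = \mathcal O_U \cdot \alpha$ is free of rank one. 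The condition $\delta_Q(\alpha) = 0$ also says $\alpha \notin \mathfrak m_Q \E_Q$, so $\alpha$ has nonzero image in each fiber, confirming the subbundle property. For uniqueness, any sub line bundle $\L'$ with $\L'_\xi = L$ must have stalk $\L'_P$ generated by some $\beta \in L$ with $\delta_P(\beta) = 0$ (the subbundle condition forces $\beta \notin \mathfrak m_P \E_P$), and the same local computation then forces $\L'_P = \L_P$ at every $P$.

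For part (3), I would apply (2) to each line $K e_i$ to produce sub line bundles $\L_i \hra \E$, then assemble the inclusions into a sheaf map $\Phi: \bigoplus_i \L_i \to \E$. On generic fibers, $\Phi_\xi$ is the $K$-linear isomorphism determined by the basis $\{e_i\}$, so by (1), $\Phi$ is an isomorphism of sheaves iff $\Phi_\xi$ preserves $\delta$. A direct calculation in the direct sum yields $\delta_P((f_1 e_1, \ldots, f_n e_n)) = \min_i \delta_P(f_i e_i)$, while $\Phi_\xi$ sends this tuple to $\sum_i f_i e_i \in \E_\xi$; hence $\delta$-preservation is precisely condition (\ref{eq.split}). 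Note that Lemma \ref{properties}(2) already supplies the inequality $\delta(\sum f_i e_i) \geq \min_i \delta(f_i e_i)$, so (\ref{eq.split}) asserts only the matching reverse inequality. The main technical obstacle is the local trivialization in (2), which rests essentially on the finite support of $\delta(\alpha)$; the arguments in (1) and (3) are then formal consequences of the definitions.
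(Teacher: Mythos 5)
Your proposal is correct and follows essentially the same route as the paper: part (1) via the tautology $\E_P=\{\alpha\in\E_\xi:\delta_P(\alpha)\geq 0\}$ and reconstruction of $\E(U)$ from $\delta$, part (2) by exhibiting a local generator of order zero (your rescaled $\alpha$ is the paper's $t^{-\delta_P(\alpha)}\alpha$), and part (3) by reducing the splitting to $\delta$-preservation of the natural map from the direct sum, which is exactly the paper's appeal to part (1). You simply spell out the details (finite support of the divisor, the computation of $\delta$ on a direct sum) that the paper leaves implicit.
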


\begin{proof}

To prove part 1, we will reconstruct the modules and maps that make up the sheaf $\E$ from $\delta$. First note that $\delta_P(\alpha)\geq 0$ if and only if $\alpha \in \E_P$. This means that $\alpha \in \E(U)$ if and only if for all $ P\in U$,  $\delta_P(\alpha)\geq 0$. Therefore we can reconstruct $\E(U)$ by taking the set of all elements in $\E_\xi$ with that property. Since each section is identified with its germ at $\xi$, the restriction maps are given by the natural inclusion maps. Since this construction is unique, the result follows.

To prove part 2, we use the argument from part 1 to see how the subspace $L$ corresponds to a subsheaf $\L \hra \E$. Fix a nonzero $\alpha \in L$. It is clear from part 1 of Lemma \ref{properties} that $\L$ is the sheaf of sections of the line bundle corresponding to the divisor $\delta(\alpha)$. It remains to verify that $\L \hra \E$ is a subbundle. This follows since $\L$ is locally generated by the nonvanishing section $t^{-\delta_P(\alpha)} \alpha$.

To prove part 3, first assume that $\E$ splits into a direct sum of line bundles. Then the divisor function on $\E_{\xi}$ satisfies (\ref{eq.split}). To obtain the other direction, note that, by part 1, property (\ref{eq.split}), combined with the divisors of the line bundles, characterizes the sheaf $\E$ up to isomorphism. \end{proof}

\begin{lem}\label{technical} Let $\alpha_1,...\alpha_n \in \E_\xi$ and suppose

\[\delta_P\left(\sum_{i=1}^n \alpha_i\right) > \min_i (\delta_P(\alpha_i)).\]

\begin{enumerate}

\item  Let $J$ be the set of $i$ such that $\delta_P(\alpha_i)=\min_i (\delta_P(\alpha_i))$. Then

\[\delta_P\left(\sum_{i \in J} \alpha_i\right) > \min_{i\in J} (\delta_P(\alpha_i)).\]

\item Assume $X=\mathbb P^1$. Further, assume that for all $i$, we have $ \delta_P(\alpha_i)=\min_i (\delta_P(\alpha_i))$.

Let $\alpha_j$ be an element of the smallest degree among $\{\alpha_i\}$. Then we can choose $f_1,...,f_n\in K$ such that

\[\delta\left(\sum_{i=1}^n f_i \alpha_i\right)> \delta(\alpha_j).\]

\end{enumerate}

\end{lem}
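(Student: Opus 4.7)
The plan for Part 1 is a short algebraic manipulation using the ultrametric property in Lemma~\ref{properties}(2). Setting $m := \min_i \delta_P(\alpha_i)$, I will split $\sum_i \alpha_i = \sum_{i \in J} \alpha_i + \sum_{i \notin J} \alpha_i$. Each $\alpha_i$ with $i \notin J$ has $\delta_P(\alpha_i) > m$, so the tail sum already has $\delta_P > m$ by Lemma~\ref{properties}(2). Since the full sum has $\delta_P > m$ by hypothesis, applying Lemma~\ref{properties}(2) once more to the difference yields $\delta_P(\sum_{i \in J} \alpha_i) > m$.

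For Part 2, my strategy is to reinterpret the hypothesis $\delta_P(\sum_i \alpha_i) > m$ as the vanishing of a sum of ``leading terms'' in the fiber $\E_P \otimes_{\mathcal{O}_P} k(P)$. Each $\alpha_i$ (with $\delta_P(\alpha_i) = m$) determines a nonzero class $\overline{\alpha_i}$ in this fiber via the reduction of $t^{-m}\alpha_i \in \E_P$, and the hypothesis is precisely $\sum_i \overline{\alpha_i} = 0$.

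I then look for $f_i \in K$ satisfying (a) $\delta(f_i \alpha_i) \geq \delta(\alpha_j)$ as divisors globally, and (b) $f_i(P) = 1$. Given such $f_i$, condition (a) together with Lemma~\ref{properties}(2) gives $\delta(\sum_i f_i \alpha_i) \geq \delta(\alpha_j)$ everywhere. Condition (b) makes $f_i$ a unit at $P$ with residue $1$, so $\delta_P(f_i \alpha_i) = m$ and the leading class is still $f_i(P)\overline{\alpha_i} = \overline{\alpha_i}$; these classes still sum to zero, forcing $\delta_P(\sum_i f_i \alpha_i) > m = \delta_P(\alpha_j)$. That is the required strict inequality somewhere in the divisor.

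The crux, and the only place the hypothesis $X = \mathbb{P}^1$ enters, is producing these $f_i$. Condition (a) places $f_i$ in the Riemann--Roch space $L(\delta(\alpha_i) - \delta(\alpha_j))$, which on $\mathbb{P}^1$ has dimension $\deg(\alpha_i) - \deg(\alpha_j) + 1 \geq 1$ since $\alpha_j$ is of smallest degree and $h^0(\mathbb{P}^1, \mathcal{O}(d)) = d+1$ for $d \geq 0$. The divisor $\delta(\alpha_i) - \delta(\alpha_j)$ has value zero at $P$, so every element of this space is regular at $P$, and the evaluation map to the residue field has kernel $L(\delta(\alpha_i) - \delta(\alpha_j) - P)$ whose dimension drops by exactly one; hence the evaluation is surjective and I can rescale to get $f_i(P) = 1$. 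The failure of this dimension count on higher-genus curves is exactly why the argument is confined to $\mathbb{P}^1$.
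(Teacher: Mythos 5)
Your proof is correct and follows essentially the same route as the paper: Part 1 is the identical ultrametric splitting of the sum over $J$ and its complement, and Part 2 constructs the same functions $f_i$ (with $\delta(f_i\alpha_i)\geq\delta(\alpha_j)$ and $f_i(P)=1$) and gets strict inequality at $P$ by the same cancellation, which the paper writes as $\sum f_i\alpha_i=\sum(f_i-1)\alpha_i+\sum\alpha_i$ and you phrase as vanishing of leading terms in the fiber $\E_P\otimes k(P)$. Your Riemann--Roch dimension count is just a more explicit justification of the step the paper dispatches with ``since $X=\mathbb P^1$, there exists $f_i$\ldots and we may further assume that $f_i(P)=1$.''
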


\begin{proof}

To prove part 1

\[\delta_P\left(\sum_{i \in J} \alpha_i\right) =\delta_P\left(\sum_{i=1}^n \alpha_i - \sum_{i\not\in J} \alpha_i\right)\geq \min\left( \delta_P\left(\sum_{i=1}^n \alpha_i\right), \min_{i\not\in J} \delta_P(\alpha_i)\right)\]

and since both $\delta_P\left(\sum_{i=1}^n \alpha_i\right)$ and each $\delta_P(\alpha_i)$ with $i\not\in J$ are greater than $\min_i \delta_P(\alpha_i)$, their sum is as well.

To prove part 2, for each $i$ we choose $D \ge \delta(\alpha_j)$ with $\deg(D) = \delta(\alpha_i)$ and $\delta_P(D) = \delta_P(\alpha_j)$. Since by assumption $X = {\mathbb P}^1$, there exists $f_i \in K$ such that $\delta(f_i\alpha_i) = D$ and we may further assume that $f_i(P) = 1$.

Since $\delta(f_i \alpha_i)\geq \delta(\alpha_j)$, we know that

\[\delta\left(\sum_{i=1}^n f_i\alpha_i \right)\geq\delta(\alpha_j).\]

To prove strict inequality, we will show that the order of the divisor of the sum is strictly larger at $P$. Intuitively this is because multiplying by $f_i$ did not change the value of the sections at $P$, so they still cancel. Algebraically, we have

\[\delta_P\left(\sum_{i=1}^n f_i \alpha_i \right)=\delta_P\left(\sum_{i=1}^n (f_i-1)\alpha_i+\sum_{i=1}^n \alpha_i\right)>\delta_P(\alpha_j)\] \end {proof}

The two parts of Lemma \ref{technical} naturally combine, as the sum produced by the first satisfies the conditions required by the second. We can now prove our theorem.

\begin{theo} All vector bundles on $\mathbb P^1$ split into a direct sum of line bundles. \end{theo}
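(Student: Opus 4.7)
My plan is to use Proposition~\ref{bundles}(3): I will produce a basis $\{e_1,\ldots,e_n\}$ of $\mathcal{E}_\xi$ for which (\ref{eq.split}) holds, and my candidate is a basis maximizing the integer invariant $\sum_i \deg(e_i)$.

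First I will verify that such a maximizer exists. Given any basis, Proposition~\ref{bundles}(2) assigns to each $e_i$ a sub line bundle $\mathcal{L}_i \hra \mathcal{E}$ of degree $\deg(e_i)$, and the resulting map $\bigoplus_i \mathcal{L}_i \to \mathcal{E}$ is an isomorphism at $\xi$, hence injective with torsion (thus nonnegative-degree) cokernel; so $\sum_i \deg(e_i) \leq \deg(\mathcal{E})$. Since the invariant takes integer values and is bounded above, a maximizing basis exists, and I fix one.

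Suppose for contradiction that (\ref{eq.split}) fails: there are $f_i \in K$ and a point $P$ with $\delta_P(\sum f_i e_i) > \min_i \delta_P(f_i e_i)$. Set $\alpha_i = f_i e_i$, and use Lemma~\ref{technical}(1) to restrict to $J = \{i : \delta_P(\alpha_i) = \min_{i'} \delta_P(\alpha_{i'})\}$, preserving the strict inequality at $P$. Pick $j \in J$ minimizing $\deg(\alpha_j)$. Before applying Lemma~\ref{technical}(2), I will normalize by dividing every $f_i$ by $f_j$: this subtracts the same divisor from every $\delta(\alpha_i)$, so $J$ and the minimizer $j$ are unchanged, the strict inequality at $P$ survives, and now $\alpha_j = e_j$. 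Lemma~\ref{technical}(2) then yields $g_i \in K$ for $i \in J$ (with $g_j = 1$, as inspection of the lemma's construction forces) such that $\beta := \sum_{i \in J} g_i f_i e_i$ satisfies $\delta(\beta) > \delta(e_j)$, and in particular $\deg(\beta) > \deg(e_j)$. The $e_j$-coefficient in $\beta$ equals $g_j f_j = 1$, so swapping $e_j$ for $\beta$ yields a new basis whose total degree is strictly larger, contradicting maximality. Hence (\ref{eq.split}) holds and Proposition~\ref{bundles}(3) delivers the splitting.

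The delicate point is the replacement step, which must simultaneously keep an $n$-tuple that is a basis and strictly raise $\sum_i \deg(e_i)$. The normalization $f_j = 1$ is what makes this work: it forces the $e_j$-coefficient of $\beta$ to be $1$, and it converts the raw output $\deg(\beta) > \deg(\alpha_j) = \deg(f_j) + \deg(e_j)$ of the lemma into the clean $\deg(\beta) > \deg(e_j)$ that controls the invariant. Without this normalization one has no control over the sign of $\deg(f_j)$, and the argument breaks.
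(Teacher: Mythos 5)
Your proof is correct, and it runs on the same engine as the paper's: Proposition \ref{bundles}(3) reduces the theorem to exhibiting a basis satisfying (\ref{eq.split}), and the two parts of Lemma \ref{technical} convert any failure of (\ref{eq.split}) at a point $P$ into an element whose divisor strictly dominates that of the lowest-degree participating basis vector. Where you differ is in the extremal principle. The paper builds its basis greedily ($e_i$ of maximal degree among sections outside the span of $e_1,\dots,e_{i-1}$, so the degrees are non-increasing and the last surviving index automatically minimizes degree) and gets its contradiction from the defining property of $e_j$: the new sum lies outside the span of $e_1,\dots,e_{j-1}$ yet has larger degree. You instead maximize the single quantity $\sum_i \deg(e_i)$ and contradict maximality by an exchange step; this is a clean alternative that avoids tracking which stage of the greedy construction is violated. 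The price is paid in the boundedness argument: the paper bounds degrees by noting that a sub line bundle of degree $d$ contributes a $(d+1)$-dimensional space of global sections to the finite-dimensional space of global sections of $\E$, whereas you invoke the degree of the vector bundle $\E$, additivity of degree, and nonnegativity of the degree of the torsion cokernel of $\bigoplus_i \L_i \hra \E$ --- correct, but somewhat more machinery than the paper otherwise permits itself.

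One correction to your closing remark: the claim that ``without this normalization one has no control over the sign of $\deg(f_j)$, and the argument breaks'' is false. For any nonzero $f \in K$ the divisor $\delta(f)$ is principal on the complete curve $\mathbb P^1$ and therefore has degree zero, so $\deg(f_j e_j) = \deg(e_j)$ with no normalization at all; the paper's own final step (``its degree is greater than $e_j$'s'') silently uses exactly this fact. Your normalization is harmless and does make the coefficient of $e_j$ in $\beta$ equal to $1$, but nonvanishing of that coefficient is all the exchange step needs, and the degree inequality was never in danger.
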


\begin{proof} Consider a vector bundle $\E$. First we show that there must be a maximal degree among the divisors of the sections. If $\E_\xi$ contains a section of degree $d$, then $\E$ contains the line bundle generated by that section, which has degree $d$ and so has a $d+1$-dimensional vector space of global sections. All these global sections will also be global sections of the vector bundle. Since it has only a finite-dimensional vector space of global sections, there must be a maximum degree.

Choose a section $e_1\in\E_\xi$ of highest degree. For each $i$ choose an $e_i$ of highest degree among sections not in the span of $e_1$ through $e_{i-1}$. Continue until the whole space is spanned. The set $\{e_1,..,e_n\}$ clearly form a basis of $\E_\xi$.

We will now argue that, for all $f_1,...,f_n\in K$, we have

\[ \delta\left(\sum_{i=1}^n f_ie_i\right)=\min_i \delta(f_ie_i)\]

By Proposition \ref{bundles}, this proves our claim. Assume that this equation fails for some $\{f_i\}$. We know that the left hand side cannot be less at any point, so it must be greater somewhere.

Choose a $P$ where equality does not hold. We use the first part of Lemma \ref{technical}, then the second, to choose a new weighted sum $\sum g_i e_i$ of the $e_i$. Because some of the elements are removed by the first part, some of the $g_i$ will be $0$. Let $e_j$ be the last, and therefore lowest-degree, basis element with nonzero coefficient $g_j$. The sum will satisfy

\[\delta\left( \sum_{i=1}^n g_i e_i\right)>\delta( f_je_j)\]

As $g_j\neq 0$, $\sum_{i=1}^ng_ie_i$ is not in the span of $e_1$ through $e_{j-1}$, but its degree is greater than $e_j$'s. This is a contradiction.\end{proof}

\bibliographystyle{alpha}

\end{document}